\newcommand{\eps}{\ensuremath{\varepsilon}}
\newcommand{\bean}{\begin{eqnarray*}}
\newcommand{\eean}{\end{eqnarray*}}
\newcommand{\WW}{\ensuremath{\mathbb{Z}_{0}}}
\newcommand{\mrw}{\ensuremath{\mathrm{w}}}
\newcommand{\mrrw}{\ensuremath{\mathrm{rw}}}
\newcommand{\NN}{\mathbb N}
\newcommand{\barr}{\begin{array}}
\newcommand{\earr}{\end{array}}
\newcommand{\be}{\begin{equation}}
\newcommand{\ee}{\end{equation}}
\newcommand{\bear}{\begin{eqnarray}}
\newcommand{\eear}{\end{eqnarray}}
\newcommand{\bfi}{\begin{figure}}
\newcommand{\efi}{\end{figure}}
\newcommand{\bc}{\begin{center}}
\newcommand{\ec}{\end{center}}
\newcommand{\beq}{\begin{eqnarray*}}
\newcommand{\eeq}{\end{eqnarray*}}
\journalname{Mathematics of Control, Signals and Systems 25(4):473--490, 2013 }
\begin{document}

\title{Two Extensions of Topological Feedback Entropy\thanks{This work was supported
by Australian Research Council grant DP110102401.}
}

\titlerunning{Topological Feedback Entropy Extensions}        

\author{Rika Hagihara         \and
        Girish N. Nair 
}

\authorrunning{R. Hagihara, G.~N. Nair} 

\institute{
           G.~N. Nair \at
Department of Electrical and Electronic Engineering, University of Melbourne, VIC 3010 Australia\\
Tel.: +61-3-8344-6701\\
Fax.:+61-3-8344-6678\\
\email{gnair@unimelb.edu.au}
}

\date{}

\maketitle

\begin{abstract}
Topological feedback entropy (TFE) measures the intrinsic rate at which a continuous, fully observed, deterministic
control system generates information for controlled set-invariance.
In this paper we generalise this notion in two directions;
one is to continuous, partially observed systems, and the other is to discontinuous, fully observed systems.
In each case we show that the corresponding generalised TFE coincides with
the smallest feedback bit rate that allows a form of controlled invariance to be achieved.
\keywords{Topological entropy \and communication-limited control \and quantised systems}

\end{abstract}


%
%




\section{Introduction}
\label{intro}

In 1965, Adler, Konheim, and McAndrew \cite{adler} introduced {\em topological entropy} as a measure of the fastest rate at which a continuous, discrete-time, dynamical system in a compact space generates initial-state information.
Though related to the measure-theoretic notion of {\em Kolmogorov-Sinai entropy} (see e.g. \cite{walters}),
it is a purely deterministic notion and requires only a topology on the state space, not an invariant measure.
Subsequently, Bowen \cite{bowenTAMS71} and Dinaburg \cite{dinaburgDokl70} proposed an alternative, metric based definition of topological entropy.
This accommodates uniformly continuous dynamics on noncompact spaces and is equivalent to the original definition on compact spaces.

These concepts play an important role in dynamical systems but remained largely neglected in control theory.
However, the emergence of digitally networked control systems (see e.g. \cite{antsaklisSpecial07}) over the last four decades
renewed interest in the information theory of feedback, and in 2004 the techniques of Adler et al. were adapted
to introduce the notion of {\em topological feedback entropy (TFE)} \cite{nairTAC03}.
Unlike topological entropy, TFE quantifies the {\em slowest} rate at which a continuous,
deterministic, discrete-time dynamical system {\em with inputs} (i.e. a control plant)
generates information, with states confined in a specified compact set.
Equivalently, it describes the rate at which the plant generates
information relevant to the control objective of set-invariance.

From an engineering perspective, the operational significance of TFE arises from the fact that it
coincides with the smallest average bit-rate between the plant and controller that allows set invariance to be achieved.
In other words, if an errorless digital channel with limited bit rate $R$ connects
the plant sensor to the controller, then a coder, controller and decoder that achieve set invariance can be constructed if and almost only if $R$
exceeds the TFE of the plant.\footnote{Without loss of generality, there can also be an errorless digital channel
from the controller to the plant actuator; in this case $R$ is taken to be the minimum of the two channel rates.}
Thus set invariance is possible if and almost only if the digital channel can transport information
faster than the plant generates it.


Later, the notion of {\em invariance entropy} was introduced for continuous, deterministic control systems in continuous time \cite{coloniusSICON09,kawanThesis,kawanDCDS11} based on the metric-space techniques of Bowen.
This measures the smallest growth rate of the number of open-loop control functions
needed to confine the states within an arbitrarily small distance from a given compact set.
In contrast, TFE is defined in a topological space and counts the minimum rate at which initial state uncertainty sets are refined,
with states confined to a given compact set.
Despite these significant conceptual differences, it has been established that TFE and invariance entropy are essentially the same object.

A limitation of the formulations above of entropy for control systems is their restriction to plants with fully observed states
and continuous dynamics.
This makes them inapplicable when only a function of the state can be measured or
the plant has discontinuities such as a quantised internal feedback loop.
A recent article \cite{coloniusMCSS11} studied continuous, partially observed plants
in continuous time, with the objective being to keep the plant outputs arbitrarily close
to a given compact set for any initial state in another set.
The notion of {\em output invariance entropy} was defined as a lower bound on the required data rate.
However, the control functions in this formulation are chosen according to the initial state.
Thus when the controller has access to only the output not the state,
the lower bound may be loose.

For discontinuous systems, notions of topological entropy have been proposed for piecewise continuous, piecewise monotone (PCPM) maps
on an interval \cite{misiurewicz92,kopf05}, using a Bowen-style metric approach.
It is shown that the topological entropy of a PCPM map coincides with the exponential growth rate of the number
of subintervals on which the iteration of the map is continuous and monotone.
In \cite{savkinAUTO06}, a metric approach was also adopted to define
a topological entropy for a possibly discontinuous, open-loop, discrete-time system driven by a sequence of disturbances.
However, it is not clear how these constructions can be adapted to feedback control systems with vector-valued states.

In this paper we use open-set techniques to extend TFE in two directions;
one is to continuous plants with continuous, partial observations in section \ref{output_section}, and the other is to a class of discontinuous plants with full state observations in section \ref{piecewisesec}.
In each case we show that the extended TFE coincides with the smallest average bit-rate
between the plant and controller that allows weak invariance to be achieved,
thus giving these concepts operational relevance in communication-limited control.

Though both generalisations involve open covers, the assumptions and
techniques underlying them are significantly different.
The questions of how to compute bounds on these notions and how to construct a unified notion of feedback entropy for
discontinuous, partially observed systems are ongoing areas of research.

\noindent {\em Terminology.} The nonnegative integers are denoted by $\WW$ and the positive integers by $\NN$.
Sequence segments $(x_s,\ldots, x_t)$ are denoted by $x_s^t$.
A collection $\alpha$ of open sets in a topological space $Z$ is called an {\em open cover}
of a set $W\subseteq Z$ if $\cup_{A\in\alpha}A\supseteq W$.
A subcollection $\beta\subseteq\alpha$ is called a {\em subcover} of $W$ if
$\cup_{B\in\beta}B\supseteq W$.
If $\alpha$ contains at least one finite subcover of $W$, then
$N(\alpha|W)\in\NN$ denotes the minimal cardinality over them all;
in the case where $W=Z$, the second argument will be dropped.

\section{Continuous, Partially Observed Systems}
\label{output_section}

In this section, we extend topological feedback entropy (TFE) to continuous, partially observed, discrete-time control systems.
We then prove that the TFE coincides with the smallest average feedback bit-rate
that allows a form of controlled set-invariance to be achieved via a digital channel.

\subsection{Weak Topological Feedback Entropy}
\label{output_WITFE}

To improve readability, most of the proofs in this subsection are deferred to the Appendix.
Let $X$ be a compact topological space and consider the continuous, partially observed, discrete-time control system
\begin{equation}
\begin{split}
x_{k+1}&=f(x_k, u_k)\in X \\
y_k&=g(x_k)\in Y
\end{split}
,\quad \forall k\in\WW,
\label{postate}
\end{equation}
where the input $u_k$ is taken from a set $U$, the output $y_k$ lies in a topological space $Y$,
and the functions $g$ and $f(\cdot, u)$, $u\in U$, are continuous.
For simplicity write
$f_u(\cdot):= f(\cdot , u)$,  $f_{u_0^{s-1}}:=f_{u_{s-1}}f_{u_{s-2}} \cdots f_{u_0}$ and for any $s\in\NN$, let $g_{u_0^{s-1}}$ denote
the continuous function that maps $x_0$ to $y_0^s$ when the plant is fed with input sequence $u_0^{s-1}$.
Given a compact target set $K \subsetneqq X$ with  nonempty interior,
assume the following:
\begin{itemize}
\item[(Ob)] The plant is \emph{uniformly controlled observable}: there exists $s \in \WW$ and an input sequence $v_0^{s-1}$
such that the map $g_{v_0^{s-1}}$  is continuously invertible on  $g_{v_0^{s-1}}(X)$.
\item[(WCI)] The set $K$ is \emph{weakly controlled invariant}: there exists $t \in \NN$
such that for any $x_0 \in X$ there exists a sequence $\{ H_k(x_0) \}_{k=0}^{t-1}$ of inputs in $U$ that ensures $x_t \in \mbox{int}K$.
\end{itemize}

Condition (Ob) states that there is a fixed input sequence $v_0^{s-1}$ that allows the initial state $x_0$ to be
recovered as a continuous function of the output sequence $y_0^{s}$. The states $x_1,\ldots, x_s$
are not required to lie inside $\mathrm{int}K$;
this gives the  freedom to trade transient control performance
off for improved accuracy in state estimation.

We also remark that the main  difference between WCI
and the usual definition of controlled set-invariance
is that the state only needs to be steerable to the target set in $t$ time steps, not one.
As a technicality, the topological methods we employ
also require the state $x_t$ to lie in the interior of $K$.

We now introduce tools to describe the information generation rate.
Pick $s \in \WW$ and an input sequence $v_0^{s-1}$.
Let $\alpha$ be an open cover of $g_{v_0^{s-1}}(X)\subseteq Y^{s+1}$, $\tau$ a positive integer,
and $G=\{ G_k: \alpha \to U \}_{k=0}^{\tau-1}$ a sequence of $\tau$ maps that assign input values to each element of $\alpha$.
Define $\mathcal{C}$ to be the set of all tuples $\left (s, v_0^{s-1}, \alpha, \tau, G\right )$
that satisfy the following constraint:
\begin{itemize}
\item[(C)] For any $A \in \alpha$ and $x_0 \in g_{v_0^{s-1}}^{-1}(A)$,
the concatenated input sequence $\left (v_0^{s-1},G(A)\right )$ yields $x_{s+\tau} \in \mbox{int}K$,
with $g_{v_0^{s-1}}$ continuously invertible on $g_{v_0^{s-1}}(X)$.
\end{itemize}



\begin{proposition}\label{output_feasibility}
If (Ob) and (WCI) hold, then  $\mathcal{C}\neq\emptyset$, i.e. constraint (C) is feasible.
\end{proposition}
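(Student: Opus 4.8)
The plan is to construct an explicit member of $\mathcal{C}$ by combining the observation horizon supplied by (Ob) with the steering horizon supplied by (WCI), and then invoking compactness to finitise the cover. First I fix, once and for all, the integer $s$ and the input sequence $v_0^{s-1}$ furnished by (Ob), so that $g_{v_0^{s-1}}$ is continuously invertible on $g_{v_0^{s-1}}(X)$; these become the first two components of the tuple. I also set $\tau := t$, where $t \in \NN$ is the horizon guaranteed by (WCI). It then remains to build a suitable cover $\alpha$ and map family $G = \{G_k\}_{k=0}^{\tau-1}$.

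For each point $z \in X$, I apply $v_0^{s-1}$ from $z$ to reach the state $f_{v_0^{s-1}}(z) \in X$ and invoke (WCI) \emph{at this state} to obtain a steering sequence $w(z) := \{H_k(f_{v_0^{s-1}}(z))\}_{k=0}^{t-1}$ driving it into $\mathrm{int}\,K$ after $t$ further steps. The composite $\Phi_z := f_{w(z)} \circ f_{v_0^{s-1}} : X \to X$ is continuous, being a finite composition of the continuous maps $f_u$, and satisfies $\Phi_z(z) \in \mathrm{int}\,K$. Since $\mathrm{int}\,K$ is open, $O_z := \Phi_z^{-1}(\mathrm{int}\,K)$ is an open neighbourhood of $z$, and by construction feeding $(v_0^{s-1}, w(z))$ to \emph{any} initial condition in $O_z$ lands the state in $\mathrm{int}\,K$ at time $s+\tau$.

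The family $\{O_z\}_{z \in X}$ covers $X$, but constraint (C) is phrased in the output space, so I transfer it across $g_{v_0^{s-1}}$. Because $g_{v_0^{s-1}}$ is continuously invertible on its image, each $O_z$ maps to a relatively open set $B_z := g_{v_0^{s-1}}(O_z) \subseteq g_{v_0^{s-1}}(X)$, which extends to some open $\tilde{B}_z \subseteq Y^{s+1}$ with $\tilde{B}_z \cap g_{v_0^{s-1}}(X) = B_z$. As $g_{v_0^{s-1}}(z) \in \tilde{B}_z$ for every $z$, the collection $\{\tilde{B}_z\}$ covers the compact set $g_{v_0^{s-1}}(X)$ (compact as a continuous image of $X$), so a finite subcover $\alpha = \{\tilde{B}_{z_1}, \ldots, \tilde{B}_{z_m}\}$ exists. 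I then define $G_k(\tilde{B}_{z_i}) := H_k(f_{v_0^{s-1}}(z_i))$, assigning to each cover element the steering sequence of its representative.

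It remains to verify (C) for the tuple $(s, v_0^{s-1}, \alpha, \tau, G)$. The key step—and the only place the injectivity in (Ob) is essential—is pulling membership back from the output space: given $A = \tilde{B}_{z_i} \in \alpha$ and $x_0 \in g_{v_0^{s-1}}^{-1}(A)$, the point $g_{v_0^{s-1}}(x_0)$ lies in $A \cap g_{v_0^{s-1}}(X) = B_{z_i} = g_{v_0^{s-1}}(O_{z_i})$, and injectivity of $g_{v_0^{s-1}}$ on $X$ forces $x_0 \in O_{z_i}$. By the defining property of $O_{z_i}$, the concatenated input $(v_0^{s-1}, G(A))$ drives $x_0$ into $\mathrm{int}\,K$ at time $s+\tau$, while continuous invertibility of $g_{v_0^{s-1}}$ holds by (Ob). Hence the tuple lies in $\mathcal{C}$, so $\mathcal{C} \neq \emptyset$. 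I expect the main obstacle to be the topological transfer in the third paragraph, namely moving open neighbourhoods from $X$ to genuine open subsets of the ambient space $Y^{s+1}$ rather than merely relatively open subsets of $g_{v_0^{s-1}}(X)$; this relies on continuity of the inverse and on the compactness that guarantees finiteness of $\alpha$.
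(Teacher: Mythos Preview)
Your proof is correct and follows essentially the same strategy as the paper: fix $s,v_0^{s-1}$ from (Ob), set $\tau=t$ from (WCI), use continuity of the dynamics together with openness of $\mathrm{int}\,K$ to build an open cover, transfer it to $Y^{s+1}$ via the homeomorphism $g_{v_0^{s-1}}$ and the subspace topology, and finitise by compactness of $g_{v_0^{s-1}}(X)$. The only cosmetic difference is that the paper builds the open neighbourhoods around the intermediate states $x_s\in f_{v_0^{s-1}}(X)$ and pulls back through $h_{v_0^{s-1}}:=f_{v_0^{s-1}}\circ g_{v_0^{s-1}}^{-1}$, whereas you build them directly around initial states $z\in X$ as $\Phi_z^{-1}(\mathrm{int}\,K)$ and push forward through $g_{v_0^{s-1}}$; since $g_{v_0^{s-1}}$ is a homeomorphism onto its image these are equivalent manoeuvres.
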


Next, we use $\alpha$ to track the orbits of the initial states.
Divide time up into cycles of duration $s+\tau$
and apply an input sequence $v_0^{s-1}$ that satisfies (Ob) for the first $s$ instants
of each cycle.
Let $A_0, A_1, \dots$ be elements of $\alpha$
and for each $j \in \NN$ define
\begin{equation}\label{B_j}
B_j :=\left\{ x_0 \in X:
\begin{array}{l} x_{i(s+\tau)} \in g_{v_0^{s-1}}^{-1}(A_i), \, 0 \leq i \leq j-1, \, \mbox{and} \\
                        u_{i(s+\tau)}^{(i+1)(s+\tau)-1}= (v_0^{s-1}, G(A_i)), \, 0 \leq i \leq j-2
\end{array} \right\}. 
\end{equation}
In other words,  $B_j$ is the set of initial states
such that during the $(i+1)$th cycle,
the sequence $y_{i(s+\tau)}^{i(s+\tau)+s}$ of the first $s+1$ outputs
lies inside $A_i$ when the sequence of $s+\tau$ inputs over the cycle is
$\left (v_0^{s-1}, G(A_i)\right )\in U^{s+\tau}$ for every $i\in[0,\ldots, j-2]$.
\begin{proposition}\label{B_j_properties}
The set $B_j$ has the following properties.
\begin{enumerate}
 \item Each $B_j$ is an open set.
 \item Every $x_0 \in X$ must lie in some $B_j$.
\end{enumerate}
\end{proposition}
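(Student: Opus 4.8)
The plan is to prove the two assertions separately: openness follows from continuity of the dynamics, while the covering property follows from an inductive symbolic construction driven by constraint (C).

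First, I would treat openness. With the symbols $A_0, A_1, \ldots$ fixed, the word applied in cycle $i$ is the deterministic string $w_i := (v_0^{s-1}, G(A_i)) \in U^{s+\tau}$, so the input conditions in the definition of $B_j$ impose no constraint on $x_0$; they merely pin down the words used to propagate the state. Since each $f_u$ is continuous and $f_{w_i}$ is a composition of $s+\tau$ such maps, the cycle map $f_{w_i}: X \to X$ is continuous, and hence so is the composite $\Phi_i := f_{w_{i-1}} \circ \cdots \circ f_{w_0}$ (with $\Phi_0 = \mathrm{id}_X$) that carries $x_0$ to $x_{i(s+\tau)}$. I would then rewrite the state condition $x_{i(s+\tau)} \in g_{v_0^{s-1}}^{-1}(A_i)$ as $x_0 \in \Phi_i^{-1}(g_{v_0^{s-1}}^{-1}(A_i))$. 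Because $g_{v_0^{s-1}}$ is continuous and each $A_i$ is open, $g_{v_0^{s-1}}^{-1}(A_i)$ is open, and its preimage under the continuous $\Phi_i$ is open; consequently $B_j = \bigcap_{i=0}^{j-1} \Phi_i^{-1}(g_{v_0^{s-1}}^{-1}(A_i))$ is a finite intersection of open sets, hence open.

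Next I would establish the covering property by constructing, for a given $x_0 \in X$, an adapted symbol string. The key is that at the start of each cycle the state lies in $X$: this is immediate for $i=0$, and for $i \geq 1$ it follows from constraint (C), which forces $x_{i(s+\tau)} \in \mathrm{int} K \subseteq X$ once $(v_0^{s-1}, G(A_{i-1}))$ has been applied from $x_{(i-1)(s+\tau)} \in g_{v_0^{s-1}}^{-1}(A_{i-1})$. The observation word $g_{v_0^{s-1}}(x_{i(s+\tau)})$ then lies in $g_{v_0^{s-1}}(X)$, which $\alpha$ covers, so I may pick $A_i \in \alpha$ with $x_{i(s+\tau)} \in g_{v_0^{s-1}}^{-1}(A_i)$ and apply $(v_0^{s-1}, G(A_i))$ to advance to the next cycle. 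By construction this string makes $x_0$ satisfy every defining condition, so in fact $x_0 \in B_j$ for all $j$; in particular each $x_0 \in X$ lies in a set of the required form, and for any fixed $j$ the family of sets $B_j$ (ranging over admissible strings) covers $X$.

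I expect the openness argument to be routine once the state conditions are recast as preimages; the only bookkeeping is to check that every composite $\Phi_i$ with $0 \leq i \leq j-1$ is well-defined, which holds because the needed words $w_0, \ldots, w_{j-2}$ are exactly those fixed by the input conditions. The crux is the covering argument: the induction can proceed only if the orbit re-enters a region whose observations are covered by $\alpha$, and this is precisely what (WCI) via (C) delivers by driving $x_{i(s+\tau)}$ back into $\mathrm{int} K \subseteq X$ after each cycle. Here I would invoke Proposition~\ref{output_feasibility} to guarantee that (C) is satisfiable, so that (C) may be applied at every step; without this re-entry the orbit could leave $X$ and the symbolic coding would break down. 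I therefore expect this re-entry step, rather than any topological estimate, to be the main point requiring care.
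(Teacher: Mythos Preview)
Your proposal is correct and follows essentially the same route as the paper: the paper also writes $B_j$ as a finite intersection of sets of the form $\Phi_{G(A_0)}^{-1}\cdots\Phi_{G(A_{i-1})}^{-1}g_{v_0^{s-1}}^{-1}(A_i)$ (your $\Phi_i^{-1}(g_{v_0^{s-1}}^{-1}(A_i))$) to obtain openness, and proves the covering property by the same inductive selection of $A_i$'s using (C) to keep $x_{i(s+\tau)}\in\mathrm{int}K\subseteq X$. One small remark: you need not invoke Proposition~\ref{output_feasibility} here, since the tuple $(s,v_0^{s-1},\alpha,\tau,G)$ is already assumed to satisfy (C) when $B_j$ is defined; feasibility of (C) is not at issue, only its repeated application.
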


\begin{corollary}
For each $j \in \NN$ the collection of sets $B_j$,
\begin{equation}\label{beta_j}
\beta_j := \{ B_j: A_0, A_1, \dots , A_{j-1} \in \alpha \},
\end{equation}
is an open cover of $X$.
\end{corollary}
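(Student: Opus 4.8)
The plan is to verify directly the two defining properties of an open cover, both of which are already supplied by Proposition~\ref{B_j_properties}. Recall from the terminology that a collection of open sets in $X$ whose union contains $X$ is an open cover of $X$; so I need only check that every member of $\beta_j$ is open and that these members collectively contain every point of $X$.

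By the definition in \eqref{beta_j}, the family $\beta_j$ consists of the sets $B_j$ obtained as the tuple $(A_0,\ldots,A_{j-1})$ ranges over all of $\alpha^{j}$. First I would invoke Proposition~\ref{B_j_properties}(i), which asserts that each such $B_j$ is open; hence every member of $\beta_j$ is an open subset of $X$. Next I would fix an arbitrary $x_0\in X$ and apply Proposition~\ref{B_j_properties}(ii) to obtain a choice of $A_0,\ldots,A_{j-1}\in\alpha$ whose associated set $B_j$ contains $x_0$. Since that set is by construction a member of $\beta_j$, the point $x_0$ lies in $\bigcup_{B\in\beta_j}B$; as $x_0$ was arbitrary, this union contains $X$. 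Together, these two observations establish that $\beta_j$ is an open cover of $X$.

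I do not anticipate any genuine obstacle here, since all the substantive content resides in Proposition~\ref{B_j_properties} and the corollary is essentially a repackaging of its two parts. The only point meriting care is the reading of part (ii): the phrase ``some $B_j$'' must be interpreted as asserting the existence of a tuple $(A_0,\ldots,A_{j-1})$ whose associated set contains the given $x_0$, which is precisely the covering statement for the family $\beta_j$ indexed by all such tuples.
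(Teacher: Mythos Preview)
Your proposal is correct and matches the paper's treatment: the corollary is stated without proof precisely because it is an immediate consequence of the two items in Proposition~\ref{B_j_properties}, and your argument spells this out directly.
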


Now, since $\beta_j$ is an open cover of $X$, compactness implies that it must contain a finite subcover.
Consider a minimal subcover with cardinality $N(\beta_j)\in\NN$.
As no set in this minimal subcover of $\beta_j$ is contained in a union of other sets, each carries new information.
Thus as $N(\beta_j)$ increases, the amount of information gained about the initial state grows.
In order to measure the asymptotic rate of information generation, we need the following:
\begin{lemma}\label{subadditivity}
The sequence ${(\log_2 N(\beta_j))}_{j=1}^{\infty}$ is subadditive.
\end{lemma}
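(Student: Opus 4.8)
The plan is to establish the stronger multiplicative estimate $N(\beta_{j+k}) \le N(\beta_j)\,N(\beta_k)$ for all $j,k\in\NN$; since $\log_2$ is increasing, taking base-$2$ logarithms then yields $\log_2 N(\beta_{j+k}) \le \log_2 N(\beta_j) + \log_2 N(\beta_k)$, which is the asserted subadditivity. Each term is well defined because the preceding corollary guarantees that $\beta_m$ is an open cover of the compact space $X$, hence admits a finite subcover, for every $m\in\NN$.

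The crux is a factorisation of the sets $B_{j+k}$ along the cycle dynamics. For each $A\in\alpha$ I would introduce the one-cycle map $\phi_A$ that sends a cycle-start state lying in $g_{v_0^{s-1}}^{-1}(A)$ to the next cycle-start state obtained by applying the input block $(v_0^{s-1}, G(A))$ over that cycle. Fixing $A_0,\ldots,A_{j+k-1}\in\alpha$ and writing $\Psi := \phi_{A_{j-1}}\circ\cdots\circ\phi_{A_0}$ for the composite over the first $j$ cycles, a map defined on $B_j(A_0,\ldots,A_{j-1})$ that carries $x_0$ to $x_{j(s+\tau)}$, I would show
\[
B_{j+k}(A_0,\ldots,A_{j+k-1}) = B_j(A_0,\ldots,A_{j-1}) \cap \Psi^{-1}\bigl(B_k(A_j,\ldots,A_{j+k-1})\bigr).
\]
This is a regrouping of the constraints defining $B_j$ in \eqref{B_j}: those indexed by cycles $0,\ldots,j-1$ are exactly membership in the $B_j$ set, while, because $f$ is time-invariant and the same block $(v_0^{s-1},G(\cdot))$ is reused in every cycle, those indexed by cycles $j,\ldots,j+k-1$, read from the restarted initial state $x_{j(s+\tau)}=\Psi(x_0)$, are precisely membership of $\Psi(x_0)$ in the $B_k$ set built from $(A_j,\ldots,A_{j+k-1})$.

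Granting the factorisation, the count is routine. I would fix minimal finite subcovers $\{B_j^1,\ldots,B_j^{N(\beta_j)}\}\subseteq\beta_j$ and $\{B_k^1,\ldots,B_k^{N(\beta_k)}\}\subseteq\beta_k$ of $X$, and form the $N(\beta_j)\,N(\beta_k)$ members of $\beta_{j+k}$ obtained by concatenating the length-$j$ index string of each $B_j^p$ with the length-$k$ index string of each $B_k^q$. Given $x_0\in X$, choose $B_j^p\ni x_0$; then $\Psi(x_0)\in X$ lies in some $B_k^q$, so by the factorisation $x_0$ belongs to the corresponding concatenated set. Thus these sets cover $X$, whence $N(\beta_{j+k})\le N(\beta_j)\,N(\beta_k)$.

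I expect the main obstacle to be pinning down the factorisation identity rigorously. Two points need care: first, the off-by-one in the control indices, since $B_j$ constrains the inputs only through cycle $j-2$ whereas $\Psi$ additionally applies $G(A_{j-1})$ during cycle $j-1$; this is consistent because on $B_j$ one has $x_{(j-1)(s+\tau)}\in g_{v_0^{s-1}}^{-1}(A_{j-1})$, exactly the domain of $\phi_{A_{j-1}}$, so $\Psi$ is genuinely defined on all of $B_j$. Second, one must verify that restarting the construction from $x_{j(s+\tau)}$ reproduces the $B_k$ conditions verbatim, which rests entirely on the time-invariance of the plant and the reuse of the fixed probing sequence $v_0^{s-1}$ and feedback rule $G$ in each cycle.
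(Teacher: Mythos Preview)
Your proposal is correct and follows essentially the same route as the paper: factor $B_{j+k}$ as $B_j\cap\Psi^{-1}(B_k)$ via the composite cycle map (the paper's $\Phi_{G(A)}$ is your $\phi_A$), pick minimal subcovers of $\beta_j$ and $\beta_k$, and verify that the concatenated family still covers $X$ because $\Psi(x_0)\in\mbox{int}K\subset X$. One minor remark: $\phi_A=f_{v_0^{s-1}G(A)}$ is a continuous map defined on all of $X$, not just on $g_{v_0^{s-1}}^{-1}(A)$, so there is no genuine domain issue for $\Psi$; your careful discussion of the off-by-one in the control indices is harmless but unnecessary.
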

The next proposition follows from Fekete's lemma; see e.g. \cite[Theorem~4.9]{walters} for a proof.
\begin{proposition}\label{inf}
The following limit exists and equals the right hand infimum:
\begin{equation}\label{limit1}
\lim_{j \to \infty}\frac{\log_2 N(\beta_j)}{j(s+\tau)}
=\inf_{j \in \mathbb{N}} \frac{\log_2 N(\beta_j)}{j(s+\tau)}.
\end{equation}
\end{proposition}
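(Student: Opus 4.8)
The plan is to reduce the statement to a direct application of Fekete's subadditive lemma, with the only additional bookkeeping being the fixed normalising factor $s+\tau$. First I would abbreviate $a_j := \log_2 N(\beta_j)$ for $j \in \NN$. Since $N(\beta_j)$ is by construction a positive integer (the minimal cardinality of a finite subcover, which exists because $X$ is compact and each $\beta_j$ is an open cover of $X$), every $a_j \ge 0$, so the sequence $(a_j)_{j=1}^{\infty}$ is nonnegative. By Lemma \ref{subadditivity} it is also subadditive, i.e. $a_{m+n} \le a_m + a_n$ for all $m,n \in \NN$.

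Second, I would invoke Fekete's lemma on $(a_j)$ to conclude that $\lim_{j\to\infty} a_j/j = \inf_{j\in\NN} a_j/j$, the common value being finite since the infimum of nonnegative numbers is nonnegative and is bounded above by $a_1$. For completeness, the standard argument runs as follows: fixing $m \in \NN$ and writing an arbitrary $j$ as $j = qm + r$ with $0 \le r < m$, repeated subadditivity gives $a_j \le q\,a_m + a_r$; dividing by $j$ and letting $j \to \infty$ (so that $qm/j \to 1$ and $a_r/j \to 0$) yields $\limsup_{j\to\infty} a_j/j \le a_m/m$. As $m$ is arbitrary, $\limsup_{j\to\infty} a_j/j \le \inf_{m\in\NN} a_m/m$, while the reverse bound $\inf_{m\in\NN} a_m/m \le \liminf_{j\to\infty} a_j/j$ is immediate from $\inf_{m} a_m/m \le a_j/j$ for every $j$. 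Hence the limit exists and equals the infimum.

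Finally, because $s+\tau$ is a fixed positive integer, multiplication by the positive constant $1/(s+\tau)$ commutes with both the limit and the infimum, so dividing the established identity through by $s+\tau$ gives exactly
\[
\lim_{j\to\infty}\frac{\log_2 N(\beta_j)}{j(s+\tau)} = \inf_{j\in\NN}\frac{\log_2 N(\beta_j)}{j(s+\tau)},
\]
as claimed. There is no genuine obstacle in this proposition: the real content lives in the subadditivity already secured by Lemma \ref{subadditivity}, and once that is in hand the result is purely a matter of citing Fekete's lemma and carrying the constant normalisation $s+\tau$ through the limit and the infimum.
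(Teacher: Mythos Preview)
Your proposal is correct and follows exactly the paper's approach: the paper simply remarks that the proposition follows from Fekete's lemma (citing \cite[Theorem~4.9]{walters}) applied to the subadditive sequence $\log_2 N(\beta_j)$ from Lemma~\ref{subadditivity}. Your write-up just makes explicit the standard Fekete argument and the trivial normalisation by $s+\tau$, which is fine.
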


We now define a topological feedback entropy for continuous, partially observed plants:
\begin{definition}
The \emph{weak topological feedback entropy (WTFE)} of the plant (\ref{postate})
with target set $K$ is defined as
\begin{equation}
h_{\mbox{w}}
:=\inf_{\left (s, v_0^{s-1},\alpha, \tau, G\right )\in\mathcal{C}}\lim_{j \to \infty} \frac{\log_2 N(\beta_j)}{j(s+\tau)}
\stackrel{(\ref{limit1})}{=}
\inf_{\left (s, v_0^{s-1},\alpha, \tau, G\right )\in\mathcal{C}, j\in\NN}
\frac{\log_2N(\beta_j)}{j(s+\tau) },
\label{WITFE}
\end{equation}
where $\mathcal{C}$ is the set of all tuples
$\left (s, v_0^{s-1},\alpha, \tau, G\right )$ that satisfy constraint (C).
\end{definition}


This definition reduces to the {\em weak invariance TFE} of \cite{nairTAC03} if the plant is fully observed, since in that case condition (Ob) is trivially satisfied with $s=0$ and $g_{v_0^{s-1}}=g$ reducing to the identity map.
Like classical topological entropy for dynamical systems \cite{adler},
it measures the rate at which initial state uncertainty sets are refined as more and more observations are taken `via' an open cover.
The differences here are that control inputs must be accommodated, only partial state observations  are possible, and the slowest rate is of interest, not the fastest.

Instead of `pulling back' and intersecting the open sets $A_0,A_1,\ldots\in\alpha$ to form  an open cover $\beta_j$ of the initial state space $X$,
suppose we simply counted the smallest cardinality of minimal subcovers of
the open cover $\alpha$ itself, under constraint (C).
It turns out that this more direct approach yields the same number:

\begin{proposition}\label{inf_equation}
The weak topological feedback entropy (\ref{WITFE}) for the continuous, partially observed
plant (\ref{postate}) satisfies the identities
\begin{equation*}\label{inf_equation_formulas}
\begin{split}
h_{\mbox{w}}
&=\inf_{\left (s,v_0^{s-1},\alpha, \tau, G\right )\in\mathcal{C}} \frac{\log_2N(\beta_1)}{s+\tau} \\
&=\inf_{\left (s,v_0^{s-1},\alpha, \tau, G\right )\in\mathcal{C}} \frac{\log_2 N(\alpha| g_{v_0^{s-1}}(X))}{s+\tau},
\end{split}
\end{equation*}
\end{proposition}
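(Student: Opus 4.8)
The plan is to prove the two displayed identities in turn, obtaining the second as an easy consequence of the first. Throughout I fix a tuple $(s,v_0^{s-1},\alpha,\tau,G)\in\mcC$ and write $\phi:=g_{v_0^{s-1}}$, which by constraint~(C) is a homeomorphism of $X$ onto $g_{v_0^{s-1}}(X)$. One inequality in the first identity is immediate: taking $j=1$ in the right-hand infimum of (\ref{WITFE}) produces exactly the term $\log_2 N(\beta_1)/(s+\tau)$ for each tuple, so the double infimum over $(\text{tuple},j)$ defining $h_{\mbox{w}}$ is bounded above by its restriction to $j=1$, giving $h_{\mbox{w}}\le\inf_{\mcC}\log_2 N(\beta_1)/(s+\tau)$. (Equivalently, subadditivity together with Proposition~\ref{inf} gives $\lim_j \log_2 N(\beta_j)/(j(s+\tau))\le \log_2 N(\beta_1)/(s+\tau)$ for each fixed tuple.)

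The reverse inequality is the crux, and I would establish it by a \emph{folding} argument that collapses $j$ consecutive cycles of a given tuple into a single cycle of a new tuple of the same normalised cost. Fix a tuple and an integer $j\ge2$. Using $\phi$, I transport the open cover $\beta_j$ of $X$ from (\ref{beta_j}) to an open cover $\alpha'$ of $g_{v_0^{s-1}}(X)$ whose elements $O_{B_j}$ are open sets of $Y^{s+1}$ with $O_{B_j}\cap g_{v_0^{s-1}}(X)=\phi(B_j)$, so that $g_{v_0^{s-1}}^{-1}(O_{B_j})=B_j$; such sets exist by the definition of the subspace topology. I keep the same observation input and the same $s$, set $\tau':=j(s+\tau)-s$ so that $s+\tau'=j(s+\tau)$, and define the control on $\alpha'$ by replaying the adaptive inputs of the $j$ original cycles: if $B_j$ is indexed by $(A_0,\dots,A_{j-1})$ as in (\ref{beta_j}), then $G'(O_{B_j}):=\bigl(G(A_0),v_0^{s-1},G(A_1),\dots,v_0^{s-1},G(A_{j-1})\bigr)$, so that $(v_0^{s-1},G'(O_{B_j}))$ is precisely the input string obtained by running the $j$ original cycles back to back.

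It then remains to verify that $(s,v_0^{s-1},\alpha',\tau',G')\in\mcC$ and that $N(\beta_1')=N(\beta_j)$. For membership in $\mcC$ I would take any $x_0\in g_{v_0^{s-1}}^{-1}(O_{B_j})=B_j$ and use (\ref{B_j}) to see that the orbit visits $x_{i(s+\tau)}\in g_{v_0^{s-1}}^{-1}(A_i)$ for each $i$; applying $G'(O_{B_j})$ then drives the final cycle from $x_{(j-1)(s+\tau)}\in g_{v_0^{s-1}}^{-1}(A_{j-1})$ under input $(v_0^{s-1},G(A_{j-1}))$, whereupon constraint~(C) for the original tuple places $x_{j(s+\tau)}=x_{s+\tau'}$ in $\mbox{int}K$; continuous invertibility of $\phi$ is inherited from the original tuple. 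For the cardinality, the $j=1$ cover of the new tuple has $B_1'=g_{v_0^{s-1}}^{-1}(O_{B_j})=B_j$, so $\beta_1'=\beta_j$ as collections of subsets of $X$ and the minimal subcover cardinalities coincide. Hence $\inf_{\mcC}\log_2 N(\beta_1)/(s+\tau)\le \log_2 N(\beta_j)/(j(s+\tau))$, and taking the infimum over all tuples and all $j$ yields $\inf_{\mcC}\log_2 N(\beta_1)/(s+\tau)\le h_{\mbox{w}}$, completing the first identity.

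For the second identity I note that for the original tuple $\beta_1$ is just the $\phi$-pullback $\{g_{v_0^{s-1}}^{-1}(A):A\in\alpha\}$ of $\alpha$; since $\phi$ is a homeomorphism, a subfamily of $\alpha$ covers $g_{v_0^{s-1}}(X)$ if and only if its pullback covers $X$, so $N(\beta_1)=N(\alpha\mid g_{v_0^{s-1}}(X))$ and the two infima agree termwise. The main obstacle is the folding step: one must confirm that the replayed control $G'$ depends only on the cover element — which is exactly what indexing $\beta_j$ by the tuple $(A_0,\dots,A_{j-1})$ secures — and that reusing the single observation input $v_0^{s-1}$ in every cycle lets the whole super-cycle retain the original observation phase of length $s$, so that no extra observation budget is incurred when the duration is inflated to $j(s+\tau)$.
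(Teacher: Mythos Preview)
Your argument is correct and follows essentially the same folding strategy as the paper: transport $\beta_j$ through the homeomorphism $g_{v_0^{s-1}}$ to obtain a new cover $\alpha'$, keep $(s,v_0^{s-1})$, and replay the cyclic inputs as a single long control $G'$. The only difference is cosmetic: the paper folds $j-1$ cycles (setting $\tau'=(j-1)(s+\tau)-s$ and $G'(B_j')=G(A_0)v_0^{s-1}\cdots v_0^{s-1}G(A_{j-2})$), which forces an auxiliary $\eps$-argument via $j/(j-1)\to 1$, whereas you fold all $j$ cycles (using that $x_{(j-1)(s+\tau)}\in g_{v_0^{s-1}}^{-1}(A_{j-1})$ together with constraint~(C) lands $x_{j(s+\tau)}$ in $\mathrm{int}\,K$), so your denominator matches $j(s+\tau)$ exactly and no $\eps$ is needed.
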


\begin{proof}
Note first that by plugging $j=1$ into the second equality in (\ref{WITFE}) we obtain
\begin{equation}
h_{\mbox{w}}\label{rewrite1}
= \inf_{(s,v_0^{s-1}, \alpha, \tau, G)\in\mathcal{C}, j\in\NN }\frac{\log_2 N(\beta_j)}{j(s+\tau)}
\leq  \inf_{(s,v_0^{s-1}, \alpha, \tau, G)\in\mathcal{C}}\frac{\log_2 N(\beta_1)}{s+\tau}.
\end{equation}

We prove that the infimum (with $j=1$) arbitrarily close to $h_{\mbox{w}}$ can be achieved.
The definition of the WTFE, combined with the fact that $\lim_{j \to \infty}j/(j-1)=1$ and Proposition \ref{inf},
implies that given $\eps >0$ we can find a tuple $(s, v_0^{s-1}, \alpha, \tau, G)$ that satisfies constraint (C) and a large $j \in \NN$ such that
\begin{equation}\label{rewrite2}
h_{\mbox{w}}
\leq \frac{\log_2 N(\beta_j)}{(j-1)(s+\tau)}
= \frac{\log_2 N(\beta_j)}{j(s+\tau)} \frac{j}{j-1}
\leq h_{\mbox{w}} +\eps.
\end{equation}
We now construct a new tuple $(s', {v'}_0^{s-1}, \alpha', \tau', G')$.
Let $s'$ and ${v'}_0^{s-1}$ be $s$ and $v_0^{s-1}$ chosen above.
The open cover $\alpha'$ of $g_{v_0^{s-1}}(X)$ in $Y^{s+1}$ is obtained as follows.
Recall that $\beta_j$ is an open cover of $X$.
We use the continuous map $g_{v_0^{s-1}}^{-1}$ to form a collection of open sets in $g_{v_0^{s-1}}(X)$, $\{ g_{v_0^{s-1}}(B_j): B_j \in \beta_j \}$.
Note that the union of sets $g_{v_0^{s-1}}(B_j)$ is $g_{v_0^{s-1}}(X)$.
Since $g_{v_0^{s-1}}(X)$ is equipped with the subspace topology of $Y^{s+1}$, each open set $g_{v_0^{s-1}}(B_j)$ in $g_{v_0^{s-1}}(X)$ can be expanded to an open set $B_j'$ in $Y^{s+1}$ so that $g_{v_0^{s-1}}^{-1}$ maps points in $B_j' \cap g_{v_0^{s-1}}(X)$ into $B_j$.
Now $\alpha'={\{ B_j': B_j \in \beta_j \}}$ is an open cover of $g_{v_0^{s-1}}(X)$ in $Y^{s+1}$.
Recall that each $B_j \in \beta_j$ is of the form
\begin{multline*}
B_j=g_{v_0^{s-1}}^{-1}(A_0) \cap \Phi_{G(A_0)}^{-1} g_{v_0^{s-1}}^{-1}(A_1) \cap \Phi_{G(A_0)}^{-1} \Phi_{G(A_1)}^{-1} g_{v_0^{s-1}}^{-1}(A_2) \cap \cdots \\
\dots \cap \Phi_{G(A_0)}^{-1} \dots \Phi_{G(A_{j-2})}^{-1} g_{v_0^{s-1}}^{-1}(A_{j-1}),
\end{multline*}
where $\Phi_{G(A)}= f_{v_0^{s-1}G(A)}$ with $A_0, \dots, A_{j-1}$ in $\alpha$.
Set $\tau'=(j-1)(s+\tau)-s$ and define a sequence of $\tau'$ maps $G'$ by
$G'(B_j')=G(A_0)v_0^{s-1}G(A_1)\dots v_0^{s-1}G(A_{j-2})$.
Since $(s, v_0^{s-1}, \alpha, \tau, G)$ satisfies constraint (C), by construction $(s', {v'}_0^{s-1}, \alpha', \tau', G')$ satisfies (C).
Clearly, $N(\beta_1') = N(\beta_j)$, where $\beta_1' = g_{v_0^{s-1}}^{-1}(\alpha')$, and from inequalities in (\ref{rewrite2}) we have
\[ h_{\mbox{w}} \leq \frac{\log_2 N(\beta_1')}{s'+\tau'} \leq h_{\mbox{w}}+\eps. \]
Since $\eps >0$ was arbitrary, the above inequalities and the earlier result in (\ref{rewrite1}) give the first equality.

For the second equality, observe that given any tuple $(s, v_0^{s-1}, \alpha, \tau, G)$ satisfying constraint (C)
we have $N(\beta_1)\equiv N(\beta_1|X)=N(\alpha | g_{v_0^{s-1}}(X))$,
where $\beta_1=g_{v_0^{s-1}}^{-1}(\alpha)$, since $\alpha$ is an open cover of
$g_{v_0^{s-1}}(X)$ in $Y^{s+1}$, $g_{v_0^{s-1}}(X)$ is equipped with the subspace topology of $Y^{s+1}$,
and $g_{v_0^{s-1}}$ is a homeomorphism from the compact space $X$ onto $g_{v_0^{s-1}}(X)$. Thus we obtain the desired equation.
\qed
\end{proof}

The first equality in this result is a technical simplification that allows the cycle index $j\in\NN $ in the definition of WTFE
to be restricted to the value 1. The second equality follows almost immediately but is
conceptually  more significant. In rough terms, it states that under constraint (C), the smallest
growth rate of the number of `topologically distinguishable' output sequences in $g_{v_0^{s-1}}(X)\subseteq Y^{s+1}$
coincides with the smallest growth rate of the number of such initial states in $X$.

\subsection{Data-Rate-Limited Weak Invariance}

The weak topological feedback entropy (WTFE) constructed in the previous subsection
is defined in abstract terms. We now show its relevance to the problem of
feedback control via a channel with finite bit-rate, for a general class of coding and control laws.

Suppose that the sensor that measures the outputs of  the plant (\ref{postate}) transmits one discrete-valued symbol $s_k$ per sampling interval to the controller over a digital channel.
Each symbol transmitted by the coder may potentially depend on all past and present outputs and past symbols,
\begin{equation}\label{symbol}
s_k=\gamma_k({\{ y_i\}}_{i=0}^{k}, {\{s_i \}}_{i=0}^{k-1})\in S_k, \quad \forall k \in \WW,
\end{equation}
where $S_k$  is a coding alphabet $S_k$ of time-varying size $\mu_k$ and
$\gamma_k: Y^{k+1}\times S_0 \times \dots \times S_{k-1} \to S_k$ is the coder map at time $k$.
Assuming that the digital channel is errorless, at time $k$ the controller has $s_0, \dots, s_k$ available and generates
\begin{equation}\label{control_input}
u_k=\delta_k({\{s_i\}}_{i=0}^{k}) \in U, \quad \forall k \in \WW,
\end{equation}
where $\delta_k: S_0 \times \dots \times S_k \to U$ is the controller map at time $k$.
Define the \emph{coder-controller} as the triple
$(S, \gamma, \delta):=( {\{ S_k \}}_{k \in \WW}$, ${\{ \gamma_k \}}_{k \in \WW}$, ${\{ \delta_k \}}_{k \in \WW} )$
of the alphabet, coder and controller sequences.

Suppose that the performance objective of the coder-controller is to render $K$ \emph{weakly invariant},
i.e. to ensure that there exists a time $q \in \NN$ such that for any $x_0 \in X$, $x_{q} \in \mathrm{int}K$.
Let $Q_\mrw\subseteq\NN$ be the set of all the invariance times $q$ of a given coder-controller
that achieves this objective.
For each $q\in Q_\mrw$, a $q$-periodic coder-controller extension that ensures $x_{q}, x_{2q}, \ldots \in \mathrm{int}K$ can be constructed by `resetting the clock' to zero at times $q,2q,\ldots$.\footnote{See section III in \cite{nairTAC03}.}
The average transmission data rate of each $q$-periodic extension is simply $\frac{1}{q} \sum_{j=0}^{q-1} \log_2 \mu_j$,
and the (smallest) average data rate required by the coder-controller is then
\begin{equation}\label{data_rate}
R:=\inf_{q\in Q_\mrw} \frac{1}{q} \sum_{j=0}^{q-1} \log_2 \mu_j \quad \mbox{(bits/sample)}.
\end{equation}

We remark that in \cite{nairTAC03}, the communication requirements of the coder-controller
were measured instead by the asymptotic average data rate, i.e.  over all $j\in\WW$, and the
weak-invariance control objective was to ensure $x_q,x_{2q},\ldots\in\mathrm{int}K$.
Thus for that coder-controller, $q\NN\subseteq Q_\mrw$, and  applying  (\ref{data_rate}) to it would
 yield a less conservative  number.

The main result of this subsection follows.
\begin{theorem}
Consider the continuous, partially observed, discrete-time plant (\ref{postate}).
Suppose that the uniformly controlled observability (Ob) and weak controlled invariance (WCI) conditions hold
for a given target set $K$.

For $K$ to be made weakly invariant by a coder-controller of the form (\ref{symbol}) and (\ref{control_input}),
the feedback data rate in $R$ (\ref{data_rate}) cannot be less than the weak topological feedback entropy:
\begin{equation*}\label{pctheorem}
R \geq h_{\mrw}.
\end{equation*}
Furthermore, this lower bound is tight: there exist coder-controllers that achieve weak invariance at data rates arbitrarily close to $h_{\mrw}$.
\end{theorem}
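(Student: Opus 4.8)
The plan is to prove the two inequalities separately, in each case building an explicit dictionary between coder-controllers of the form (\ref{symbol})--(\ref{control_input}) and tuples in $\mathcal{C}$, and to measure rates through the direct cover-cardinality formula $h_{\mrw}=\inf_{(s,v_0^{s-1},\alpha,\tau,G)\in\mathcal{C}}\frac{\log_2 N(\alpha| g_{v_0^{s-1}}(X))}{s+\tau}$ supplied by Proposition \ref{inf_equation}. Throughout I use that, by (Ob) and compactness of $X$, the selected map $g_{v_0^{s-1}}$ is a homeomorphism of $X$ onto $g_{v_0^{s-1}}(X)$, so that open covers of $X$ and of the output-segment set $g_{v_0^{s-1}}(X)\subseteq Y^{s+1}$ transport to one another exactly as in the proof of Proposition \ref{inf_equation}.

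For the lower bound $R\ge h_{\mrw}$, fix any weak-invariance coder-controller and any invariance time $q\in Q_\mrw$. The coder emits symbols $s_k\in S_k$ with $|S_k|=\mu_k$, so at most $M:=\prod_{j=0}^{q-1}\mu_j$ distinct symbol sequences, hence (the controls being functions of the symbols) at most $M$ distinct length-$q$ control sequences $\omega$, can occur over a $q$-horizon started from any initial condition. For each such $\omega$ the set $V_\omega:=f_\omega^{-1}(\mathrm{int}\,K)$ is open, where $f_\omega$ is the $q$-fold composition driven by $\omega$, and weak invariance, applied to every possible initial condition, makes $\{V_\omega\}$ an open cover of $X$ with at most $M$ cells. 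Using the homeomorphism $g_{v_0^{s-1}}$ of (Ob), I transport the pulled-back cover $\{f_{v_0^{s-1}}^{-1}(V_\omega)\}$ of $X$ to an open cover $\alpha$ of $g_{v_0^{s-1}}(X)$ with $N(\alpha| g_{v_0^{s-1}}(X))\le M$, take $\tau=q$, and set $G(A)=\omega$ on the cell corresponding to $V_\omega$. Constraint (C) then holds for the reason discussed below, and Proposition \ref{inf_equation} yields $h_{\mrw}\le\frac{\log_2 M}{s+q}\le\frac1q\sum_{j=0}^{q-1}\log_2\mu_j$; infimizing over $q\in Q_\mrw$ gives $h_{\mrw}\le R$.

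For tightness, fix $\eps>0$ and use Proposition \ref{inf_equation} to pick a tuple $(s,v_0^{s-1},\alpha,\tau,G)\in\mathcal{C}$ with a minimal finite subcover $\{A_1,\dots,A_N\}$, $N=N(\alpha| g_{v_0^{s-1}}(X))$, for which $\frac{\log_2 N}{s+\tau}<h_{\mrw}+\eps$. I build a coder-controller that hard-codes the inputs $v_0^{s-1}$ over the first $s$ instants (alphabet size $1$), transmits at time $s$ the single symbol $i\in\{1,\dots,N\}$ with $y_0^s=g_{v_0^{s-1}}(x_0)\in A_i$ (available since the coder map at time $s$ sees $y_0^s$), and thereafter applies the stored open-loop controls $G(A_i)$ over $[s,s+\tau-1]$. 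Constraint (C) guarantees $x_{s+\tau}\in\mathrm{int}\,K$, so $q=s+\tau\in Q_\mrw$; since the only informative symbol is the one at time $s$, this gives $R\le\frac{\log_2 N}{s+\tau}<h_{\mrw}+\eps$, and letting $\eps\downarrow0$ completes the argument.

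The delicate point is the lower bound, because a general coder-controller uses output feedback at every step whereas a tuple in $\mathcal{C}$ may observe only during the window $[0,s-1]$ and must thereafter act open-loop. The reconciliation exploits determinism: rather than reusing the controls the coder-controller would generate from $x_0$, I apply, open-loop from the post-observation state $x_s=f_{v_0^{s-1}}(x_0)$, the length-$q$ control that the \emph{same} coder-controller produces when $x_s$ is taken as the initial condition, classifying initial states by the cell of $x_s$. Because $V_\omega=f_\omega^{-1}(\mathrm{int}\,K)$ is precisely the set of states driven into $\mathrm{int}\,K$ by $\omega$, this open-loop application reproduces the required invariance for every state in a cell, so a single quantised observation suffices and the cell count stays at most $M$. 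Verifying that (Ob) permits this $X$-cover to be expanded to an open cover of $g_{v_0^{s-1}}(X)$ in $Y^{s+1}$, through the subspace-topology expansion used in the proof of Proposition \ref{inf_equation}, is then routine.
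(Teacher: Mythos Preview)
Your proof is correct and follows essentially the same two-phase strategy as the paper: for necessity, prefix an observation phase $v_0^{s-1}$ and then run the given coder-controller from $x_s$ to produce a tuple in $\mathcal{C}$; for achievability, turn a near-optimal tuple into a coder-controller that transmits only the cover index at time $s$. The one noteworthy difference is a slight streamlining in the necessity direction: the paper partitions by coding regions $C^i=\Gamma^{-1}(c_s^{s+\tau-1})$ and then enlarges each to an open set $D^i$ by taking pointwise open neighbourhoods via continuity of $f_u$, whereas you go straight to the open sets $V_\omega=f_\omega^{-1}(\mathrm{int}K)$, which are already open and already carry the invariance property by definition --- this is the same cover (indeed $D^i\subseteq V_{\omega_i}$), obtained more directly.
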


This theorem says that weak invariance can be achieved by some coder-controller if and (almost) only if the available data rate
exceeds the WTFE of the plant on the target set.
This gives  operational significance to WTFE and justifies its intepretation as the rate at which
the plant generates information for weak-invariance.

The remainder of this subsection comprises the proof of this result.

\paragraph{Necessity of the Lower Bound.}

Let the coder-controller $(S, \gamma, \delta)$ achieve invariance with data rate $R$. By (\ref{data_rate}),
$\forall\eps>0$ there exists $q \in Q_\mrw$ such that $x_{q} \in \mbox{int}K$ for any $x_0\in X$ and
\begin{equation}\label{approximation}
\frac{1}{q}\sum_{k=0}^{q-1}\log_2 \mu_k
<R+\eps.
\end{equation}
Let $(S^1,\gamma^1,\delta^1)$ be the $q$-periodic extension,
which achieves weak invariance at times $q,2q,\ldots$.

We wish to transform this periodic coder-controller into another that more closely matches
the structure of a WTFE quintuple and has nearly the same data rate.
Specifically, we seek a periodic coder-controller each cycle of which has two phases.
The first phase is  an initial `observation' phase, during which the controller applies a pre-agreed input sequence so as to enable the coder
to determine the exact value of the current state.
This is followed by an `action' phase, where the current state value is used to generate symbols and controls so as to achieve weak invariance by the end of the cycle.

Let $s \in \NN$ and the input sequence $v_0^{s-1}$ satisfy condition (Ob).
Set $\tau=q$ and construct a $(s+\tau)$-periodic  coder-controller $(S^2, \gamma^2, \delta^2)$
as follows; to simplify notation, the coding and control laws are defined only for the first cycle.
First, the controller applies the input sequence $u_0^{s-1}=v_0^{s-1}$;
during this time, the coder transmits an `empty' symbol.
For the remainder of the cycle, i.e. $s\leq k\leq s+\tau-1$, the coder-controller implements  $(S^1, \gamma^1, \delta^1)$
and obtains a new state $x_{s+\tau}\in \mbox{int}K$.
This can be achieved by defining the coder-controller $(S^2, \gamma^2, \delta^2)$ by
\begin{equation*}
\begin{split}
S^2_{k}&=
 \begin{cases}
 \{0\} & \mbox{if  $0\leq k\leq s-1$}\\
 S_{k-s}^1 & \mbox{if $s \leq k \leq s+\tau-1$}
 \end{cases}\\
s_{k} \equiv \gamma^2_{k}(y_0^k, s_0^{k-1}) &=
 \begin{cases}
 0 &  \mbox{if  $0\leq k\leq s-1$}\\
  \gamma^1_{k-s} (y_s^k, s_s^{k-1})
  &  \mbox{if $s \leq k \leq s+\tau-1$}
 \end{cases}, \\
u_{k}\equiv \delta^2_{k}(s_0^{k})&=
 \begin{cases}
 v_k & \mbox{if  $0\leq k\leq s-1$}\\
  \delta^1_{k-s} (s_s^{k})   & \mbox{if $s \leq k \leq s+\tau-1$}
 \end{cases}
\end{split}
\end{equation*}

Observe that the symbol sequence $s_s^{s+\tau-1}$ is completely determined by $x_{s}$ by a fixed map that incorporates both coder and controller laws.
Thus there exist maps $\Gamma$ and $\Delta$ such that
\begin{equation}\label{p_coder_controller}
\begin{split}
 s_s^{s+\tau-1}&\equiv \Gamma (x_{s}), \\
u_s^{s+\tau-1} &\equiv \Delta \big( s_s^{s+\tau-1}\big).
\end{split}
\end{equation}

Now consider the disjoint coding regions $\Gamma^{-1}(c_s^{s+\tau-1})$,
$c_s^{s+\tau-1}\in$ $S^2_s \times \cdots \times S^2_{s+\tau-1}$.
The total number of distinct symbol sequences is
$\prod_{k=s}^{s+\tau-1}\mu_{k-s}= \prod_{k=0}^{\tau-1}\mu_{k}$,
which must be greater than or equal to the number $n$ of distinct coding regions.
Denote these coding regions by $C^1, \dots, C^n$ and note that $f_{v_0^{s-1}}(X) \subseteq \cup_{i=1}^n C^i$.
We can now rewrite the control law in (\ref{p_coder_controller}) as
\begin{equation}\label{new_p_coer_controller}
u_s^{s+\tau-1}=\Delta^*\big( C^i \big) \quad \mbox{if} \quad x_{s} \in C^i,
\end{equation}
where $\Delta^*(C^i)=\Delta(c_s^{s+\tau-1})$ iff $C^i=\Gamma^{-1}(c_s^{s+\tau-1})$.

We now construct  the open cover $\alpha$ of $g_{v_0^{s-1}}(X)$
and the mapping sequence $G={\{ G_{k}: \alpha \to U \}}_{k=0}^{\tau-1}$ required in the definition of the WTFE.
To define $\alpha$,  first construct an open cover of $X$ as follows.
Observe that for any $x_s$ in region $C^i$,
\[ \Phi_{\Delta^*(C^i)}(x_s)\in \mbox{int}K,\]
where the left hand side denotes $f_{u_s^{s+\tau-1}}$ with the control law (\ref{new_p_coer_controller}).
By the continuity of $f_u$ (hence of $\Phi_{\Delta^*(C^i)}$) and the openness of $\mbox{int}K$,
 it then follows that for any $x_s \in C^i$ there is an open set $O(x_s)$ that contains $x_s$ and is such that
\[ \Phi_{\Delta^*(C^i)}(x)\in \mbox{int}K , \ \ \forall x \in O(x_s). \]
We can construct for each $i\in[1,\ldots , n]$ an open set $D^i=\cup_{x_s \in C^i} O(x_s)$ so that
\begin{equation*}\label{open_sets}
\Phi_{\Delta^*(C^i)}(x) \in \mbox{int}K, \ \ \forall  x \in D^i,
\end{equation*}
which is equivalent to
\begin{equation*}
\Phi_{\Delta^*(C^i)}(D^i) \subseteq \mbox{int}K.
\end{equation*}
We then use the continuous map
$h_{v_0^{s-1}}=f_{v_0^{s-1}}g_{v_0^{s-1}}^{-1}$ to form the
collection $\{ h_{v_0^{s-1}}^{-1}(D^i) \}_{i=1}^n$ of open sets in $g_{v_0^{s-1}}(X)$.
As $g_{v_0^{s-1}}(X)$ is equipped with the subspace topology of $Y^{s+1}$, each open set
$h_{v_0^{s-1}}^{-1}(D^i)$ in $g_{v_0^{s-1}}(X)$ can be expanded to
an open set $L^i$ in $Y^{s+1}$ so that $h_{v_0^{s-1}}$ maps points in $L^i \cap g_{v_0^{s-1}}(X)$ into $D^i$.
The open cover $\alpha$ for $g_{v_0^{s-1}}(X)$ is defined by $\alpha = \{ L^1, \dots, L^n \}$.
Finally, construct the mapping sequence $G$ on $\alpha$ by
\[ G_{k}(L^i)=\,\mbox{the $k$th element of $\Delta^*(C^i)$ \quad for each $L^i \in \alpha$ and $0 \leq k \leq \tau -1$}.    \]
It is evident that constraint (C) on $s, v_0^{s-1}, \alpha, \tau, G$ is satisfied.

The minimum cardinality $N(\alpha|g_{v_0^{s-1}}(X))$ of subcovers of $\alpha$ does not exceed the number $n$ of sets $L^i$ in $\alpha$.
From this we obtain
\begin{equation*}
\begin{split}
h_{\mrw}
&=\inf_{s, v_0^{s-1}, \alpha, \tau, G} \frac{\log_2 N(\alpha| g_{v_0^{s-1}}(X))}{s+\tau} \\
&\leq \frac{\log_2 N(\alpha | g_{v_0^{s-1}}(X))}{s+\tau} \\
&\leq \frac{\log_2 n}{s+\tau} \\
&\leq \frac{\log_2 \left( \prod_{k =0}^{\tau-1} \mu_{k } \right)}{s+\tau}
=\frac{1}{s+\tau}\sum_{k =0}^{\tau-1}\log_2 \mu_{k }
\leq \frac{1}{\tau}\sum_{k =0}^{\tau-1}\log_2 \mu_{k }.
\end{split}
\end{equation*}
\noindent From (\ref{approximation})
the last term of the right hand side is less
than $R+ \eps$, since $\tau=q$.
Hence $R \geq h_{\mrw}$.

\paragraph{Achievability of the Lower Bound}

To prove that a data rate arbitrarily close to $h_{\mrw}$ can be achieved,
we show that any tuple $(s, v_0^{s-1}, \alpha, \tau, G)$ that satisfies constraint (C) yields
an $(s+\tau)$-periodic coder-controller that renders $K$ weakly invariant in every cycle.

Let
\begin{equation*}\label{uninfimised_entropy}
H:=\frac{\log_2 N(\alpha|g_{v_0^{s-1}}(X))}{s+\tau}.
\end{equation*}
Recalling that $\alpha$ is an open cover of the compact set $g_{v_0^{s-1}}(X)$,
select a minimal subcover of $\alpha$ and denote it by $\{ A^1, \dots, A^m \}$, where $m=N(\alpha|g_{v_0^{s-1}}(X))$.
Since $g_{v_0^{s-1}}$ is a homeomorphism from $X$ onto $g_{v_0^{s-1}}(X)$ and $g_{v_0^{s-1}}(X)$ is equipped with the subspace topology of $Y^{s+1}$,
we see that $\{ g_{v_0^{s-1}}^{-1}(A^i)\}_{i=1}^m$ is a finite open cover of $X$.
We construct an $(s+\tau)$-periodic coding law using these overlapping sets as follows; to simplify notation, the coding and control laws are described only for the first cycle.

For each $k\in[0, s+\tau-1]$, set $S_k = \{1,\ldots , m\}$ if $k=s$  and $S_k=\{0 \}$ otherwise.
Then let
\begin{equation}\label{coding_rule}
s_k=
\begin{cases}
\min \{ i: x_{0} \in g_{v_0^{s-1}}^{-1}(A^i) \} &\mbox{if $k=s$} \\
0 & \mbox{otherwise}
\end{cases}.
\end{equation}
The coding alphabet is of size $\mu_k=m$ when $k=s$ and $1$ otherwise.

The next step is to construct the controller from the input sequence $v_0^{s-1}$ and mapping sequence $G$.
For the first $s$ instants of the cycle, the controller applies inputs $u_0^{s-1}=v_0^{s-1}$.
At time $s$, the coder determines the initial state $x_0=g_{v_0^{s-1}}^{-1}(y_0^s)$.
Upon receiving the symbol $s_{s}$ that indexes an open set $g_{v_0^{s-1}}^{-1}(A^i)$ containing $x_{0}$,
the controller applies control inputs via the rule $u_s^{s+\tau-1}=G(A^i)$.

The coder-controller thus constructed has period $s+\tau$.
By constraint (C) we have $x_{s+\tau} \in \mbox{int}K$, hence weak $(s+\tau)$-invariance is achieved.

By (\ref{coding_rule}), the average data rate over the cycle is
\begin{equation*}
\bar{R} =\frac{\log_2 m}{s+\tau}=\frac{\log_2 N(\alpha|g_{v_0^{s-1}}(X))}{s+\tau} \equiv H.
\end{equation*}
As $h_{\mrw}$ is the infimum of $H$, for any $\eps >0$ we can find $(s, v_0^{s-1}, \alpha, \tau, G)$ yielding $H < h_{\mrw}+\eps$.
Hence $\bar{R}<h_{\mrw}+\eps$. The result follows by observing that $R\leq\bar{R}$ by
definition (\ref{data_rate})  and then choosing $\eps$ arbitrarily small.

\section{Piecewise Continuous, Fully Observed Plants}
\label{piecewisesec}

In this section we introduce the  notion of {\em robust weak topological feedback entropy} for
a class of piecewise continuous, fully observed plants.
We then establish the relevance of this notion for bit-rate limited control.

\subsection{Robust Weak Topological Feedback Entropy}

As before, let $X$ be a compact topological space.
We consider the fully observed, piecewise continuous plant
\begin{equation}\label{pcstate}
x_{k+1}=F(x_k, u_k)\in X,
\ \ \forall k\in\WW,
\end{equation}
where the input $u_k$ is taken from a set $U$.
For simplicity write $F_u:= F( \cdot, u)$ and $F_{u_0^k}:= F_{u_k}\cdots F_{u_0}$.
Assume that there exists a finite partition $\mathcal{P}$ of $X$
and that each connected component of any element of $\mathcal{P}$ has nonempty interior.
For each $u \in U$ the map $F_u$ is continuous on each element of $\mathcal{P}$, i.e. $F_u$ is piecewise continuous.

Let $K$ be a compact target set with nonempty interior,
and impose the following condition on the plant:
\begin{itemize}
\item[(RWI)] The compact set $K$ can be made \emph{robustly weakly controlled invariant} under $F$:
there exists $t \in \NN$
such that for any $x \in X$,
 there exists an open set $O(x)$ containing $x$ and an input sequence $\{ H_k(x) \}_{k=0}^{t-1}$ in $U$
that ensures $x_t \in \mbox{int}K$ for any $x_0 \in O(x)$.
\end{itemize}

Note that this is stricter than the weak controlled invariance (WCI) condition in the previous section.
It restricts our focus to plants with states that can be driven in finite time into the interior of
$K$,
despite an arbitrarily small error in the  initial state measurement.
This requirement is not satisfied by all discontinuous plants, but is reasonable
if for instance the plant consists of an underlying continuous
system  with an `inner' quantised control loop and an `outer' control loop to be designed,
i.e. $F(x,u)\equiv f(x,w,u)$, where the inner control input $w$ is a quantised function  of $x$.
It can be shown that if $f$ is continuous and $K$ is WCI,
then it is always possible to find a sequence of quantised {\em joint} inputs
$(w,u)\equiv(q_1(x),q_2(x))$ that preserves WCI in the presence of small state measurement errors.
Condition (RWI) then applies if the inner loop control law is chosen to be such a robust law $q_1(x)$.
For reasons of space, details are omitted.

We now introduce a feedback entropy concept for this system.
The key difference from Section \ref{output_section} is that the output map $g$ is the identity and so the index $s$ in condition (Ob) there
is 0. This leads to taking an open cover $\alpha$ directly on $X$.
We form a triple $(\alpha, \tau, G)$, where $\tau \in \NN$ and $G=\{ G_k: \alpha \to U \}_{k=0}^{\tau-1}$
is a sequence of $\tau$ maps that assign input variables to all elements of $\alpha$.
Let $\mathcal{RC}$ be the set of all triples  $(\alpha, \tau, G)$
that satisfy the following constraint:
\begin{itemize}
\item[(RC)]
For any $A \in \alpha$ and $x_0 \in A$, the input sequence $G(A)$ yields $x_{\tau} \in \mbox{int}K$, i.e.
\[ F_{G_{\tau-1}(A)} \dots F_{G_0(A)}(A) \subseteq \mbox{int}K. \]
\end{itemize}

\begin{proposition}
If condition (RWI) is assumed, then constraint (RC) is feasible, i.e. $\mathcal{RC}\neq\emptyset$.
\end{proposition}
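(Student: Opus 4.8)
The plan is to build an explicit triple $(\alpha,\tau,G)$ directly from the data furnished by (RWI) and then verify that it meets (RC). First I would invoke (RWI) to obtain a single $t\in\NN$ such that every $x\in X$ carries an open neighbourhood $O(x)\ni x$ together with a length-$t$ input sequence $\{H_k(x)\}_{k=0}^{t-1}$ in $U$ that drives every $x_0\in O(x)$ into $\mbox{int}K$ at time $t$. The family $\{O(x)\}_{x\in X}$ is then an open cover of $X$.

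Next, since $X$ is compact, I would extract a finite subcover $O(x_1),\ldots,O(x_m)$. Setting $\tau:=t$ and $\alpha:=\{O(x_1),\ldots,O(x_m)\}$ yields a finite open cover of $X$. For the input assignment I would define, for each element $A=O(x_i)\in\alpha$ and each $0\le k\le\tau-1$, the value $G_k(A):=H_k(x_i)$, so that $G(A)=\{H_k(x_i)\}_{k=0}^{t-1}$. To verify (RC), take any $A=O(x_i)\in\alpha$ and any $x_0\in A$: by the defining property of $O(x_i)$ in (RWI), applying the input sequence $G(A)$ sends $x_0$ to $x_\tau=F_{G_{\tau-1}(A)}\cdots F_{G_0(A)}(x_0)\in\mbox{int}K$. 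As this holds for every $x_0\in A$, we obtain $F_{G_{\tau-1}(A)}\cdots F_{G_0(A)}(A)\subseteq\mbox{int}K$, which is precisely (RC). Hence $(\alpha,\tau,G)\in\mathcal{RC}$, so $\mathcal{RC}\neq\emptyset$.

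There is essentially no hard analytic step; the argument is a compactness reduction, and the points requiring care are purely organisational. The decisive use of compactness is that $\alpha$ must be a \emph{finite} cover (the entropy machinery counts minimal finite subcovers), which is exactly what reduces the possibly infinite cover $\{O(x)\}_{x\in X}$ to a usable one. It is equally essential that the horizon $t$ in (RWI) be uniform over $x$, since (RC) demands a single common $\tau$; this is why (RWI) is phrased with one $t$ valid for all states. Note also that the piecewise continuity of $F$ is not invoked directly here, because the neighbourhood $O(x)$ and its controllability into $\mbox{int}K$ are already supplied by (RWI) itself.

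The one minor technicality I would flag is the well-definedness of $G$ on $\alpha$ regarded as a set of open sets: if two distinct centres $x_i,x_j$ happen to produce the same open set, I would simply fix one associated input sequence, or equivalently treat $\alpha$ as the indexed family $\{O(x_i)\}_{i=1}^m$. Since (RC) holds for each admissible choice, no ambiguity survives, and the feasibility conclusion is unaffected.
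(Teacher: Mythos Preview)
Your proof is correct and follows essentially the same route as the paper: use (RWI) to produce the open cover $\{O(x)\}_{x\in X}$ with associated input sequences, pass to a finite subcover by compactness of $X$, set $\tau=t$, and define $G$ on each surviving set via the input sequence attached to its centre. The paper's argument is identical up to notation, and your additional remarks on the necessity of a uniform $t$, the role of finiteness, and the well-definedness of $G$ are accurate and add clarity without departing from the original strategy.
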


\begin{proof}
We give a complete proof to emphasise the difference from the setting in Section \ref{output_section}, cf.~Proposition \ref{output_feasibility}.
Assume (RWI).
We construct $(\alpha, \tau, G)$ that satisfies (RC) as follows.
Set $\tau$ equal to $t$ in (RWI).
By (RWI), for each $x \in X$ there is an open set $O(x)$ that contains $x$ and is such that $F_{H_{\tau-1}(x)} \dots F_{H_0(x)}(O(x)) \subseteq \mbox{int}K$.
By ranging $x$ in $X$ we form an open cover of $X$, $\{ O(x): x \in X \}$.
The compactness of $X$ implies that there exists a finite subcover $\alpha=\{ O (x_q) \}_{q=1}^r$.
Construct the sequence $G=\{ G_k \}_{k=0}^{\tau-1}$ of $\tau$ maps on $\alpha$ by
\begin{equation*}
G_k(O(x_q))=H_k(x_q) \quad  \mbox{for each $1 \leq q \leq r$ and $0 \leq k \leq \tau-1$.}
\end{equation*}
By construction we have that if $x \in O(x_q)$ for some $1 \leq q \leq r$ then
\[ F_{G(O(x_q))}(x)=F_{\{ H_k(x_q) \}_{k=0}^{\tau-1}}(x) \in \mbox{int}K. \]
This confirms the feasibility of (RC) under (RWI).
\qed
\end{proof}

Motivated by Proposition \ref{inf_equation}, we introduce the following concept:
\begin{definition}
The \emph{robust weak topological feedback entropy (RWTFE)} of the plant (\ref{pcstate}) is
\begin{equation}\label{RWITFE}
h_{\mrrw}:=\inf_{(\alpha, \tau, G)\in\mathcal{RC}}\frac{\log_2N(\alpha)}{\tau}.
\end{equation}
\end{definition}

Unlike TFE and the classical topological entropy, this definition does not involve
pulling back and intersecting the sets in $\alpha$ to  measure the rate
at which initial state uncertainty sets are refined.
Thus  it cannot be viewed {\em a priori} as an index of the rate at which the plant generates initial-state information.
However, such an interpretation becomes plausible due to the results in the next subsection.

\subsection{Robust Weak Invariance Under a Data-Rate Constraint}

We now show the relevance of robust weak topological entropy to the problem of
feedback control via a channel with finite bit-rate.

Consider the fully observed, piecewise continuous plant (\ref{pcstate}).
We now introduce a feedback loop with a coder-controller of the general form (\ref{symbol})--(\ref{control_input}),
but with the outputs $y_i$  replaced by states $x_i$.
The performance objective of the coder-controller is to render  $K$ \emph{robustly weakly invariant}, that is, to guarantee that $\exists q \in \NN$ such that $\forall x_0 \in X$, there is an open neighbourhood $O(x_0)\ni x_0$ with the property that the input sequence $u_0^{q-1}$ generated by the coder-controller $(S, \gamma, \delta)$ {\em acting on} $x_0$ ensures $F_{u_0^{q-1}} (x) \in \mbox{int}K$,  $\forall x \in O(x_0)$.
In other words, we desire that the control inputs generated by  (\ref{symbol})--(\ref{control_input}) for a plant with nominal initial state $x_0$ should still succeed in achieving weak $q$-invariance if the true initial state $x$ is sufficiently  close.

Let $Q_{\mrrw}\subseteq\NN$ be the set of all the robust weak invariance times $q$ of a given coder-controller that achieves this objective.
For  each $q\in Q_{\mrrw}$, we can construct a $q$-periodic coder-controller extension by `resetting the clock' to zero
at times $q,2q,\ldots$.\footnote{See section III in \cite{nairTAC03}.}
As before, the (smallest) average data rate required by the coder-controller is
then
\begin{equation}\label{r_data_rate}
R:=\inf_{q\in Q_{\mrrw}} \frac{1}{q} \sum_{j=0}^{q-1} \log_2 \mu_j.
\end{equation}


The main result of this subsection follows:
\begin{theorem}
Consider the piecewise continuous, fully observed plant (\ref{pcstate}) with a compact target set $K$ having nonempty interior.
Assume that initial states are in the compact topological space $X$.
Suppose that the robust weak invariance condition (RWI) holds on $F, K, U$.

For $K$ to be made robustly weakly invariant
by a coder-controller of the form (\ref{symbol})--(\ref{control_input}), the feedback data rate $R$ in (\ref{r_data_rate})
cannot be less than the robust weak topological feedback entropy (\ref{RWITFE}):
\begin{equation*}\label{rtheorem}
R\geq h_{\mrrw}.
\end{equation*}
Furthermore, this lower bound is tight: there exist coder-controllers that achieve robust weak invariance
at data rates arbitrarily close to $h_{\mrrw}$.
\end{theorem}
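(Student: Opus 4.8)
The plan is to mirror the structure of the preceding theorem for continuous, partially observed plants, exploiting the simplification that here the output map is the identity, so the observation index $s$ is $0$ and no `observation phase' is needed. As in that argument I would split the proof into a \emph{necessity} part establishing $R\geq h_{\mrrw}$ and an \emph{achievability} part producing coder-controllers whose data rate approaches $h_{\mrrw}$. The conceptual pivot is that the open sets appearing in a triple $(\alpha,\tau,G)\in\mathcal{RC}$ play exactly the role of the robustness neighbourhoods demanded by robust weak invariance, so the pull-back-and-intersect construction of Section \ref{output_section} is unnecessary.

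For necessity, suppose a coder-controller achieves robust weak invariance with data rate $R$. Given $\eps>0$ I would pick $q\in Q_{\mrrw}$ with $\frac1q\sum_{k=0}^{q-1}\log_2\mu_k<R+\eps$ and pass to its $q$-periodic extension. Since the plant is fully observed and the loop is closed, the whole cycle of symbols $s_0^{q-1}$ is a fixed function $\Gamma$ of the nominal initial state $x_0$, and the applied input sequence is a fixed function $\Delta$ of those symbols; I would group states by their symbol word, obtaining disjoint coding regions $C^1,\dots,C^n$ with $n\leq\prod_{k=0}^{q-1}\mu_k$ and $X\subseteq\bigcup_i C^i$, on each of which the open-loop input is a constant word $\Delta^*(C^i)$. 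The crucial step is to enlarge each $C^i$ to an \emph{open} set: because the coder-controller renders $K$ robustly weakly invariant, every $x_0\in C^i$ carries an open neighbourhood $O(x_0)$ on which the nominal input $\Delta^*(C^i)$ still lands in $\mbox{int}K$, so $D^i:=\bigcup_{x_0\in C^i}O(x_0)$ is open and satisfies $F_{\Delta^*(C^i)}(D^i)\subseteq\mbox{int}K$. Setting $\alpha=\{D^1,\dots,D^n\}$, $\tau=q$, and reading $G$ off from $\Delta^*$ gives a triple in $\mathcal{RC}$, whence
\[ h_{\mrrw}\leq\frac{\log_2 N(\alpha)}{\tau}\leq\frac{\log_2 n}{q}\leq\frac1q\sum_{k=0}^{q-1}\log_2\mu_k<R+\eps, \]
and letting $\eps\downarrow 0$ yields $R\geq h_{\mrrw}$.

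For achievability, I would start from an arbitrary $(\alpha,\tau,G)\in\mathcal{RC}$, extract a minimal finite subcover $\{A^1,\dots,A^m\}$ of $X$ with $m=N(\alpha)$, and build a $\tau$-periodic coder-controller that, at time $0$ of each cycle, observes $x_0$ and transmits the index $\min\{i:x_0\in A^i\}$ (an alphabet of size $m$), transmitting a trivial symbol otherwise, with the controller responding with the input word $G(A^i)$. Constraint (RC) gives $F_{G(A^i)}(A^i)\subseteq\mbox{int}K$; since $A^i$ is open and contains $x_0$, the set $A^i$ itself serves as the robustness neighbourhood $O(x_0)$, so robust weak $\tau$-invariance holds. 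This coder-controller has per-cycle rate $\log_2 m/\tau=\log_2 N(\alpha)/\tau$; since $\tau\in Q_{\mrrw}$ we have $R\leq\log_2 N(\alpha)/\tau$, and infimising over $\mathcal{RC}$ drives the achievable rate to within $\eps$ of $h_{\mrrw}$.

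The main obstacle, and the point where this proof genuinely departs from the continuous case, is the enlargement of the coding regions $C^i$ to open sets in the necessity part. There one cannot invoke continuity of $F_u$ together with the openness of $\mbox{int}K$, because $F$ is only piecewise continuous and a coding region may straddle a partition boundary. The replacement is to draw the required open neighbourhoods directly from the robust-invariance \emph{objective} itself (and, for feasibility, from condition (RWI)); verifying that these neighbourhoods assemble into a valid member of $\mathcal{RC}$ with the claimed cardinality bound is the technical heart of the argument. A secondary point to check carefully is that $\Gamma$ is genuinely well-defined as a function of $x_0$ alone, which relies on full state observation closing the loop deterministically over each cycle.
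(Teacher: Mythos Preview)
Your proposal is correct and follows essentially the same approach as the paper's proof: both the necessity argument (coding regions $C^i$ enlarged to open sets via the robustness neighbourhoods supplied by the robust-weak-invariance objective, yielding a triple in $\mathcal{RC}$ of cardinality $\leq n$) and the achievability argument (minimal subcover of a triple in $\mathcal{RC}$ turned into a $\tau$-periodic coder-controller, with the open cover element itself serving as the robustness neighbourhood) match the paper step for step. Your explicit identification of the key departure from the continuous case---that the open enlargement comes from the control objective rather than from continuity of $F$---is exactly the point the paper's construction hinges on.
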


The rest of this subsection consists of the proof of this result.

\paragraph{Necessity of the Lower Bound.}

Pick an arbitrarily small $\eps>0$.
Given  a coder-controller $(S, \gamma, \delta)$ that achieves robust weak invariance,
there exists a robust weak invariance time $q \in \NN$ such that
\[ \frac{1}{q} \sum_{k=0}^{q-1}\log_2 \mu_k \leq R +\eps. \]
Set $\tau=q$ and  note that the symbol  sequence $s_0^{\tau-1}$
is completely determined by the initial state $x_0$,
i.e. there exist maps $\Gamma$ and $\Delta$ such that
\begin{equation}\label{r_p_coder_controller}
\begin{split}
s_0^{\tau-1} &\equiv \Gamma (x_0) \\
u_0^{\tau-1} &\equiv \Delta ( s_0^{\tau-1} ).
\end{split}
\end{equation}

Now consider the disjoint regions $\Gamma^{-1}(c_{0}^{\tau -1}) \subseteq X$
as the symbol sequence $c_0^{\tau-1}$ varies over all possible sequences in $S_0 \times \dots \times S_{\tau-1}$.
The total number of distinct symbol sequences is $\prod_{k=0}^{\tau-1}\mu_k$, and
hence the total number $n$ of distinct coding regions does not exceed it.
Denote these coding regions by $C^1, \dots, C^n$ and note that $X= \cup_{i=1}^n C^i$.
We can then rewrite the control equation in (\ref{r_p_coder_controller}) as
\begin{equation}\label{new_r_p_coer_controller}
u_0^{\tau-1}=\Delta^*\big( C^i \big) \quad \mbox{if} \quad x_0 \in C^i
\end{equation}
by defining the map $\Delta^*(C^i)=\Delta(c_{0}^{\tau-1})$ iff $C^i=\Gamma^{-1}(c_{0}^{\tau-1})$.

We now construct the open cover $\alpha$ and the mapping sequence
$G={\{ G_k: \alpha \to U \}}_{k=0}^{\tau-1}$
required in the definition of RWTFE.
Before we define $\alpha$, observe that for any $x$ in coding region $C^i$,
\[ \Phi_{\Delta^*(C^i)}(x)\in \mbox{int}K,\]
where the left hand side denotes the dynamical map $F_{u_0^{\tau-1}}$ applied with the input sequence (\ref{new_r_p_coer_controller}).
By robust weak invariance, it then follows that for any $x \in C^i$ there is an open set $O(x)$ that contains $x$ and is such that
\[ \Phi_{\Delta^*(C^i)}(x_0)\in \mbox{int}K \quad \mbox{for any $x_0 \in O(x)$}. \]
We can construct for each $1 \leq i \leq n$ an open set $L^i=\cup_{x \in C^i}O(x)$ so that
\begin{equation*}\label{r_open_sets}
\Phi_{\Delta^*(C^i)}(x_0) \in \mbox{int}K \quad \mbox{for any $x_0 \in L^i$,}
\end{equation*}
which is equivalent to
\begin{equation*}
\Phi_{\Delta^*(C^i)}(L^i) \subseteq \mbox{int}K.
\end{equation*}
As $C^i \subseteq L^i$ and $X = \cup_{i=1}^n C^i$, we have an open cover $\alpha=\{ L^1, \dots, L^n \}$ for $X$.
Finally, construct the mapping sequence $G$ on $\alpha$ by
\[ G_k(L^i)=\,\mbox{the $k$th element of $\Delta^*(C^i)$ \quad for each $L^i \in \alpha$ and $0 \leq k \leq \tau -1$}.    \]
\noindent It is evident that constraint (RC) on $(\alpha, \tau, G)$ is satisfied.

By construction we know that $N(\alpha)\leq n$ and we obtain
\begin{equation*}
\begin{split}
h_{\mrrw}&:=\inf_{(\alpha, \tau, G)\in\mathcal{RC}}\frac{\log_2 N(\alpha)}{\tau} \\
&\leq \frac{\log_2 N(\alpha)}{\tau}\\
&\leq\frac{\log_2 n}{\tau} \\
&\leq \frac{\log_2 \left (\prod_{k=0}^{\tau-1} \mu_k\right )}{\tau}=\frac{1}{\tau}\sum_{k=0}^{\tau-1}\log_2 \mu_k \leq R+\eps.
\end{split}
\end{equation*}
Since $\eps$ was arbitrary, we have the desired result.

\paragraph{Achievability of the Lower Bound.}

To prove that data rate arbitrarily close to $h_{\mrrw}$ can be achieved, we first show that any triple $(\alpha, \tau, G)$ that satisfies constraint (RC) with $K$ induces a coder-controller that renders $K$ robustly weakly invariant.
Given such a triple, define
\begin{equation*}\label{r_uninfimised_entropy}
H:=\frac{\log_2 N(\alpha)}{\tau}.
\end{equation*}
Recalling that $\alpha$ is an open cover of  compact set $X$, select and denote by
$\{ D^1, \dots, D^m \}$ a minimal subcover of $\alpha$, where $m=N(\alpha)$.

We construct a $\tau$-periodic coding law using these overlapping sets via the rule
\begin{equation*}\label{r_coding_rule}
s_k=
\begin{cases}
\min \{ i: x_0 \in D^i \}& \mbox{if $k =0$} \\
0 &\mbox{otherwise}
\end{cases}, \  \ 0\leq k\leq \tau -1.
\end{equation*}
The coding alphabet is of size $\mu_k=m$ when $k=0$ and $1$ otherwise.

The next step is to construct the controller from the mapping sequence $G$.
Upon receiving the symbol $s_{0}=i$ that indexes an open set $D^i$ in the minimal subcover of $\alpha$,
containing $x_{0}$, the controller applies the input sequence
\begin{equation*}\label{r_achiev_periodic_controller}
u_0^{\tau-1} =G(D^i)
\end{equation*}
By assumption $(\alpha, \tau, G)$ satisfies constraint (RC).
From this it is easy to see that $K$ is robustly weakly invariant with the coder-controller and $\tau$ defined above.

By (\ref{r_coding_rule}), the average data rate over the cycle is
\begin{equation*}
\bar{R} =\frac{\log_2 m}{\tau}=\frac{\log_2 N(\alpha)}{\tau} \equiv H.
\end{equation*}
As $h_{\mrrw}$ is the infimum of $H$, for any $\eps >0$ we can find $(\alpha, \tau, G)$ yielding $H < h_{\mrrw}+\eps$.
Hence $\bar{R}<h_{\mrrw}+\eps$. The result follows by observing that $R\leq\bar{R}$ by
definition (\ref{r_data_rate})  and then choosing $\eps$ arbitrarily small.

\section{Conclusion}
In this paper we used open set techniques to propose two extensions of topological feedback entropy (TFE)
; one is to continuous plants with continuous, partial observations, and the other to a class of discontinuous plants with full state observations.
In each case we showed that the extended TFE coincides with the smallest average bit-rate
between the plant and controller that allows weak invariance to be achieved,
thus giving these concepts operational meaning in communication-limited control.

Our focus here has been on formulating the concepts of TFE rigorously
and showing the fundamental limitations they impose on the communication rates needed for control.
Future work will focus on computing upper and lower bounds on them for various system classes,
and on strengthening the weak-invariance objective to controlled invariance.

Another important question left for future work is: how to construct a unified notion of TFE for
discontinuous, partially observed plants.
Such a notion would be an important step toward a theory  of information flows
for cooperative nonlinear control systems interconnected by digital communication channels.

\appendix


\section{Proof for Proposition \ref{output_feasibility}}
Assume (Ob) and (WCI).
Suppose that $s$ and $v_0^{s-1}$ satisfy (Ob).
We construct $\alpha, \tau, G$ that satisfy (C) as follows.
Set $\tau$ equal to $t$ in (WCI).
For $\alpha$, we first observe the following.
By (WCI), the continuity of $f_u$, and openness of $\mbox{int}K$, for any $x_s \in X$ there is an open set $O(x_s)$ in $X$ that contains $x_s$ and is such that $f_{H_{\tau-1}(x_s)} \dots f_{H_0(x_s)}(O(x_s)) \subset \mbox{int}K$.
By ranging $x_s$ in $f_{v_0^{s-1}}(X)$ we obtain an open cover $\{ O(x_s): x_s \in f_{v_0^{s-1}}(X) \}$ of $f_{v_0^{s-1}}(X)$ in $X$.

We then use the continuous map $h_{v_0^{s-1}}:=f_{v_0^{s-1}}g_{v_0^{s-1}}^{-1}$ to form the collection of open sets in $g_{v_0^{s-1}}(X)$, $\{ h_{v_0^{s-1}}^{-1}(O(x_s)): x_s \in f_{v_0^{s-1}}(X) \}$.
Note that the union of sets $h_{v_0^{s-1}}^{-1}(O(x_s))$ is $g_{v_0^{s-1}}(X)$.
Since $g_{v_0^{s-1}}(X)$ is equipped with the subspace topology of $Y^{s+1}$, each open set $h_{v_0^{s-1}}^{-1}(O(x_s))$ in $g_{v_0^{s-1}}(X)$ can be expanded to an open set $S'(x_s)$ in $Y^{s+1}$ so that $h_{v_0^{s-1}}$ maps points in $S'(x_s) \cap g_{v_0^{s-1}}(X)$ into $O(x_s)$.
Now $\{S'(x_s): x_s \in f_{v_0^{s-1}}(X) \}$ is an open cover of $g_{v_0^{s-1}}(X)$ in $Y^{s+1}$.

The open cover $\alpha$ of $g_{v_0^{s-1}}(X)$ in $Y^{s+1}$ is then chosen to be a finite subcover $\{ L^1, \dots, L^r \}$ of $\{S'(x_s)\}$.
The existence of a finite subcover is guaranteed by the compactness of $g_{v_0^{s-1}}(X)$ as the image of the compact set $X$ under the continuous map $g_{v_0^{s-1}}$.
Let $x_s^q$ be the point in $f_{v_0^{s-1}}(X)$ such that $L^q=S'(x_s^q)$, and let
\begin{equation*}
G_k(L^q)=H_k(x_s^q), \quad 1 \leq q \leq r, \, 0 \leq k \leq \tau-1.
\end{equation*}
\noindent By construction we have that if $x_0 \in g_{v_0^{s-1}}^{-1}(L^q)$ then
\begin{equation*}
f_{v_0^{s-1}G(L^q)} (x_0)
= f_{v_0^{s-1}{\{ H_k(x_s^q)\}}_{k=0}^{\tau-1}}(x_0) \in \mbox{int}K.
\end{equation*}
This confirms the feasibility of (C) under (Ob) and (WCI).

\section{Proof for Proposition \ref{B_j_properties}}
The openness of $B_j$ is confirmed by writing it as
 \begin{multline*}
B_j=g_{v_0^{s-1}}^{-1}(A_0) \cap \Phi_{G(A_0)}^{-1} g_{v_0^{s-1}}^{-1}(A_1) \cap \Phi_{G(A_0)}^{-1} \Phi_{G(A_1)}^{-1} g_{v_0^{s-1}}^{-1}(A_2) \cap \cdots \\
 \dots \cap \Phi_{G(A_0)}^{-1} \dots \Phi_{G(A_{j-2})}^{-1} g_{v_0^{s-1}}^{-1}(A_{j-1}),
 \end{multline*}
where $\Phi_{G(A)}:= f_{v_0^{s-1}G(A)}$ is a continuous map.

Since $g_{v_0^{s-1}}$ is continuous and $\alpha$ is an open cover of $g_{v_0^{s-1}}(X)$ in $Y^{s+1}$, the collection $\{ g_{v_0^{s-1}}^{-1}(A) : A \in \alpha \}$ is an open cover of $X$.
Hence any $x_0 \in X$ must be in some $g_{v_0^{s-1}}^{-1}(A_0)$, $A_0 \in \alpha$.
Then constraint (C) implies that the $s+\tau$ inputs $v_0^{s-1}G(A_0)$ forces $x_{s+\tau} \in \mbox{int}K \subset X$.
Repeating this process indefinitely, we see that for any $x_0 \in X$ there is a sequence $A_0, A_1, \dots$ of sets in $\alpha$ such that $x_{i(s+\tau)} \in g_{v_0^{s-1}}^{-1}(A_i)$ when the input sequences $u_{i(s+\tau)}^{(i+1)(s+\tau)-1}$
are used.

\section{Proof for Lemma \ref{subadditivity}}
For each $j, k \in \NN$, the collection $\beta_{j+k}$ consists of all sets of the form
\begin{multline*}
B_{j+k}=\left[ g_{v_0^{s-1}}^{-1}(A_0) \cap \Phi_{G(A_0)}^{-1} g_{v_0^{s-1}}^{-1}(A_1) \cap \dots \cap \Phi_{G(A_0)}^{-1} \dots \Phi_{G(A_{j-2})}^{-1} g_{v_0^{s-1}}^{-1}(A_{j-1}) \right] \\
        \cap \Phi_{G(A_0)}^{-1} \dots \Phi_{G(A_{j-1})}^{-1} \Big( g_{v_0^{s-1}}^{-1}(A_j) \cap \Phi_{G(A_j)}^{-1} g_{v_0^{s-1}}^{-1}(A_{j+1}) \cap \cdots \\
        \dots \cap \Phi_{G(A_j)}^{-1} \dots \Phi_{G(A_{j+k-2})}^{-1} g_{v_0^{s-1}}^{-1}(A_{j+k-1}) \Big),
\end{multline*}
where $\Phi_{G(A)}= f_{v_0^{s-1}G(A)}$ with $A_0, \dots, A_{j+k-1}$ ranging over $\alpha$.
Note that the expression inside the square brackets runs over all sets in $\beta_j$, while the expression inside the large parentheses runs over all sets in $\beta_k$.
Constrain ${\{ A_i \}}_{i=0}^{j-1}$ to index sets in a minimal subcover $\beta_j'$ of $\beta_j$, and ${\{ A_i \}}_{i=j}^{j+k-1}$ to those in a minimal subcover $\beta_k'$ of $\beta_k$.
Denote the constrained family of sets $B_{j+k}$ thus formed by $\beta_{j+k}^{*}$.

We claim that $\beta_{j+k}^{*}$ is still an open cover for $X$.
To see this, observe that any $x_0 \in X$ must lie in a set $B_j' \in \beta_{j}'$ indexed by some sequence ${\{ A_i \}}_{i=0}^{j-1}$ in $\alpha$.
Furthermore, $\Phi_{G(A_{j-1})} \dots \Phi_{G(A_0)}(x_0) \in \mbox{int}K \subset X$ and thus lies in some set $B_k'$ in the minimal subcover $\beta_k'$.
Hence $x_0 \in \Phi_{G(A_0)}^{-1} \dots \Phi_{G(A_{j-1})}^{-1} (B_k')$ and so $\beta_{j+k}^{*}$ is still a cover for $X$.
As there are $N(\beta_j)$ sets in $\beta_j'$, and to each there correspond $N(\beta_k)$ possible sets in $\beta_k'$, the number of distinct elements in $\beta_{j+k}^{*}$ is less than or equal to $N(\beta_j) N(\beta_k)$.
By the definition of minimal subcovers we have that for any $j, k \in \NN$,
\begin{equation*}
N(\beta_{j+k}) \leq N(\beta_j) N(\beta_k).
\end{equation*}
Take logarithm with base $2$ on each side of the inequality.

\bibliographystyle{spmpsci}      

\bibliography{rep1ref}

\begin{thebibliography}{10}
\providecommand{\url}[1]{{#1}}
\providecommand{\urlprefix}{URL }
\expandafter\ifx\csname urlstyle\endcsname\relax
  \providecommand{\doi}[1]{DOI~\discretionary{}{}{}#1}\else
  \providecommand{\doi}{DOI~\discretionary{}{}{}\begingroup
  \urlstyle{rm}\Url}\fi

\bibitem{adler}
Adler, R., Konheim, A., McAndrew, M.: Topological entropy.
\newblock Trans. Amer. Math. Soc. \textbf{114}, 309--19 (1965)

\bibitem{antsaklisSpecial07}
Antsaklis, P., Baillieul, J. (eds.): Special {I}ssue on the {T}echnology of
  {N}etworked {C}ontrol {S}ystems, in Proc. IEEE, vol.~95.
\newblock IEEE (2007)

\bibitem{bowenTAMS71}
Bowen, R.: Entropy for group endomorphisms and homogeneous spaces.
\newblock Trans. Amer. Math. Soc. \textbf{153}, 401--14 (1971)

\bibitem{coloniusSICON09}
Colonius, F., Kawan, C.: Invariance entropy for control systems.
\newblock SIAM J. Contr. Optim. \textbf{48}(3), 1701--21 (2009)

\bibitem{coloniusMCSS11}
Colonius, F., Kawan, C.: Invariance entropy with outputs.
\newblock Mathematics of Control, Signals and Systems \textbf{22}, 203--27
  (2011)

\bibitem{dinaburgDokl70}
Dinaburg, E.I.: The relation between topological entropy and metric entropy.
\newblock Soviet Math. Dokl. \textbf{190}(3), 19--22 (1970).
\newblock Dokl. Akad. Nauk SSSR (Russian), vol. 11, pp. 13--16, 1969

\bibitem{kawanThesis}
Kawan, C.: Invariance entropy for control systems.
\newblock Ph.D. thesis, Uni. Augsburg, Germany (2011)

\bibitem{kawanDCDS11}
Kawan, C.: Upper and lower estimates for invariance entropy.
\newblock Discrete and Continuous Dynamical Systems - Series A \textbf{30}(1),
  169--86 (2011)

\bibitem{kopf05}
Kopf, C.: Coding and entropy for piecewise continuous piecewise monotone
  transformations.
\newblock Nonlinear Analysis \textbf{61}, 269--75 (2005)

\bibitem{misiurewicz92}
Misiurewicz, M., Ziemian, K.: Horseshoes and entropy for piecewise continuous
  piecewise monotone maps.
\newblock In: From Phase Transitions to Chaos, pp. 498--500. World Scientific
  Publishing (1992)

\bibitem{nairTAC03}
Nair, G.N., Evans, R.J., Mareels, I.M.Y., Moran, W.: Topological feedback
  entropy and nonlinear stabilization.
\newblock IEEE Trans. Autom. Contr. \textbf{49}(9), 1585--97 (2004).
\newblock In special issue on Networked Control (guest eds. P. Antsaklis and J.
  Baillieul)

\bibitem{savkinAUTO06}
Savkin, A.V.: Analysis and synthesis of networked control systems: topological
  entropy, observability, robustness, and optimal control.
\newblock Automatica \textbf{42}(1), 51--62 (2006)

\bibitem{walters}
Walters, P.: An Introduction to Ergodic Theory.
\newblock Springer (1982)

\end{thebibliography}

%
%

\end{document}